     \def\section{\@startsection{section}{1}%
     \z@{.7\linespacing\@plus\linespacing}{.5\linespacing}%
     {\bfseries
     \centering
     }}
     \def\@secnumfont{\bfseries}
\newtheorem{theorem}{Theorem}[section]
\newtheorem{lemma}[theorem]{Lemma}
\newtheorem{proposition}[theorem]{Proposition}
\theoremstyle{definition}
\newtheorem{hypothesis}[theorem]{Hypothesis}
\newtheorem{definition}[theorem]{Definition}
\newtheorem{example}[theorem]{Example}
\theoremstyle{remark}
\newtheorem{remark}[theorem]{Remark}
\numberwithin{equation}{section}
\newcommand{\cV}{\mathcal V}
\begin{document}

\title[Taylor expansions and Castell estimates]{Taylor expansions and Castell estimates for solutions of stochastic differential equations driven by rough paths}

\author{Qi Feng*}
\address{Qi Feng: Department of Mathematics, University of Southern California, Los Angeles, CA 90089-2532, USA}
\email{qif@usc.edu}
\thanks{* Corresponding author}
\urladdr{https://sites.google.com/site/qifengmath/}

\author[Xuejing Zhang]{Xuejing Zhang}

\address{Xuejing Zhang: Investment management, Lincoln financial Group, Philadelphia, PA , 19103, USA}
\email{xuejing.zhang@lfg.com}

\subjclass[2010] {Primary 60H10; Secondary 60H30}

\keywords{Rough differential equations; Castell estimates; fractional Brownian motion; Gaussian process.}

\begin{abstract}
We study the Taylor expansion for the solutions of differential equations driven by $p$-rough paths
with $p>2$. We prove a general theorem concerning the convergence of the Taylor expansion on a nonempty interval provided that the vector fields are analytic on a ball centered at the initial point. We also derive criteria that enable us to study the rate of convergence of the Taylor expansion. Finally, as the most original part of this paper, we prove Castell expansions and tail estimates with exponential decays for the remainder terms of the solutions of the stochastic differential equations driven by continuous centered Gaussian process with finite $2D~\rho-$variation and fractional Brownian motion with Hurst parameter $H>1/4$.
\end{abstract}

\maketitle

\section{Introduction}

 The aim of this paper is to study the stochastic Taylor series of the solutions of stochastic differential equations (SDE) driven by general Gaussian process. This idea was first introduced by R. Azencott \cite{Aze} and G. Ben Arous \cite{G.Ben} when the SDE is driven by Brownian motion.  
   A convergence result for the stochastic Taylor series was established on a non-empty time interval by G. Ben Arous, using the $L_{2}$ bound of iterated integrals of Brownian motion and a Borel-Cantelli type argument (see also F. Castell \cite{Cast}). Furthermore, a Castell expansion for the solution of SDE driven by Brownian motion was first proved by F. Castell \cite{Cast} using the methods introduced by R. Azencott \cite{Aze}. This convergence result has been extended by F. Baudoin and X. Zhang \cite{FB3} to SDE driven by fractional Brownian motion (fBm) with Hurst parameter $H>1/2$, using both a pathwise deterministic approach and a probabilistic approach. A weak type of such approximation was studied by F. Baudoin and L. Coutin \cite{FB1} for SDEs driven by fBm with Hurst parameter $1/4<H<1/2$. Our goal is to extend both the convergence and the Castell expansion results for SDEs driven by general Gaussian process. Let us summarize some  related works on Taylor expansion and Castell expansion. 

\noindent(1) By using $\mathcal{C}^1$ approximation flow, I. Bailleul \cite{BAI} proved the Taylor expansion of differential equations driven by weak geometric rough paths on Banach spaces with weaker Lipschitz conditions on the vector fields. Thus, it also gives  weaker estimates for the remainder terms of the Taylor expansions. A deterministic estimate of the remainder term of a similar Castell expansion by studying flows driven by Banach space-valued weak geometric H\"older $p$-rough path is proved in \cite{BAI2}. \\
$(2)$ Y. Boutaib, L.Gyurko, T. Lyons and D. Yang \cite{BOU} proved a dimension-free estimate of rough differential equation which also gives a remainder term estimate of the Taylor expansion. Later on, by using branched rough path introduced by M. Gubinelli \cite{Gub}, H. Boedihardjo \cite{BOE} proved that the iterated integrals of branched rough path decay factorially fast (in the tree factorial sense). Later on, H. Bordihardjo, T. Lyons and D. Yang \cite{BOE2} used the method different from \cite{Lyons} to show that the remainder term of rough Taylor expansion decay factorially fast without using the neoclassical inequality. In this paper, we use the result by T. Lyons \cite{Lyons} to get our tail estimates and we have different assumptions on the vector fields.

The paper is arranged as below.  In section 2, we focus on rough differential equation (RDE) (\ref{rough differential equation}) (RDE), where the $V_i's$ are $C^{\infty}$ vector fields on $\mathbb{R}^n$ with bounded derivatives, and the driving signal $x$ is a $d$-dimensional continuous path with bounded $p$-variation, for $p>2$. The results in this section are deterministic in the rough path framework.
\begin{align}\label{rough differential equation}
y(t)=y_0+\sum^{d}_{i=0}\int^{t}_{0}V_{i}\left(y(s)\right)dx^{i}(s) .
\end{align}
We first define the Taylor expansion associated with RDE \eqref{rough differential equation}.
Under further assumption that the vector fields $V_i$'s are analytic on the set $\{y:  \|y-y_0\| \leq C\}$ for some $C>0$, we prove a general convergence result of the Taylor series for the solution $y(t)$ of RDE (\ref{rough differential equation}). More precisely, we are able to express the solution $y(t)$ of (\ref{rough differential equation}) as the sum of its Taylor expansion on a non-empty interval. We then use the estimate of iterated integrals ( see \cite{FBa}[Theorem 7.16], also \cite{Lyons}) to provide convergence criteria that enables us to express the non-empty interval in a more quantitative way and to study the rate of convergence of the Taylor series.  In section $3$, we follow the approach introduced by Friz-Victoir where we introduce the approximating sequence to fBm with $H>1/4$ and continuous centered Gaussian process with finite $2D$ $\rho-$variation and i.i.d. components. Then our results from Section $2$ is well adapted to both cases. In particular, by using the methods introduced by R. Azencott \cite{Aze} and F. Castell \cite{Cast}, we prove a Castell expansion and tail estimate for the remainder term for the solutions of SDEs driven by centered i.i.d Gaussian process with finite $2D$ $\rho-$variation and fBm with Hurst parameter $H>1/4$. In particular, the tail estimate for fBm with $H>1/2$ verifies the claim in \cite{FBh}. We leave the proofs of technical lemmas in the appendices.

\section{Taylor expansion for differential equations driven by $p$-rough paths.}

Let us introduce the following basic notion for our use in this paper. For a detailed review and study of the rough path theory, see \cite{FH15, Pfriz, Ly2} and the references therein. 

For a $d$-dimensional $p$-rough path $x(t)=(x^1(t),\cdots,x^d(t))$, we denote rough iterated integral for $x(t)$ as
\[
\int_{\triangle^{k}[0,t]}dx^{I}=\int_{0<t_1<t_2\cdots<t_k<t}dx^{i_1}(t_1)\cdots dx^{i_k}(t_k),
\]
where $\triangle^{k}[0,t]=\{(t_1,\cdots,t_k)\in[0,t]^{k}, 0\leq t_1\leq t_2\cdots\leq t_k\leq t\}$ is a partition of $[0,t]$. In general, we denote $\mathcal{D}([s,t])$ as the set of all the partitions of time interval $[s,t]$. For simplicity, we denote $X^I_t=\int_{\triangle^{k}[0,t]}dx^{I}$ and $x^0(t)=t$. We then introduce the iterated integral of order $k$ as \[
\int_{\triangle^{k}[s,t]}dx^{\otimes k}=\sum_{I\in\{1,\cdots,d\}^{k}}\left(\int_{\triangle^{k}[s,t]}dx^{I}\right)e_{i_1}\otimes \cdots \otimes e_{i_k},
\]
where $(e_1.\cdots, e_d)$ is the canonical basis of $\mathbb{R}^d$. Define the $p$-variation norm  $\left\|\int dx^{\otimes k}\right\|_{p-var, [s,t]}$  as
\[
\left\|\int dx^{\otimes k}\right\|_{p-var, [s,t]}\equiv\left( \sup_{\Pi\in \mathcal{D}[s,t]}\sum^{n-1}_{i=0}\left\|\int_{\triangle^{k}_{[t_{i}, t_{i+1}]}}dx^{\otimes k}\right\|^{p}\right)^{1/p}.
\]      

For a word $I=(i_1,\cdots,i_k)\in \{0,\cdots,d\}^k$, we call $|I|$ the size of $I$ which equals $k$ here. Then for vector fields $V_0,V_1,\cdots,V_d$  on $\mathbb{R}^n$, we denote $V_I$ as the iterated Lie brackets of the  vector fields $V_i's$, 
\[
V_I=[V_{i_1}[V_{i_2},\cdots[V_{i_{k-1}},V_{i_k}]]\cdots],\quad \text{for}\quad I=(i_1,\cdots,i_k).
\]
 We use the notation $\Lambda_I$ as below (see \cite{Cast} for details):
\begin{align}\label{lambda notation}
\Lambda_I(x)_t=\sum_{\sigma \in \sigma_{|I|}}\frac{(-1)^{e(\sigma)}}{|I|^2{|I|-1\choose e(\sigma)}}x(t)^{I\circ \sigma^{-1}},	
\end{align}

where $\sigma$ is a permutation of size $|I|$, and we denote $\sigma_{|I|}$ as the set of all permutations of size $|I|$. We denote $e(\sigma)$ ( see \cite{Str} for details) as the cardinality of the error set $\{i\in \{1,\cdots,k-1\}; \sigma(i)>\sigma(i+1)\}$. For a word $I$ of size $k$, we have $I\circ \sigma=(i_{\sigma(1)},\cdots,i_{\sigma(k)}).$

\subsection{Taylor expansion of the solution}
Throughout this section, we study the RDE in the rough path sense which is always deterministic. 
The basic equation we consider is the following,
\begin{align}
\label{3.2}
y(t)=y_0+\sum^{d}_{i=0}\int^{t}_{0}V_{i}\left(y(s)\right)dx^{i}(s) 
\end{align}
We make the following hypothesis throughout this section.
\begin{hypothesis}\label{hypo: vector-path}
\noindent\emph{$(i)$} 
The $V_i$'s are $C^{\infty}$ vector fields on $\mathbb{R}^n$ with bounded derivatives, and analytic on the set $\{y:  \|y-y_0\| \leq C\}$ for some $C>0$.\\
\noindent\emph{$(ii)$}  The driving path $x:[0,T] \rightarrow \mathbb{R}^{d}$ is geometric $p$-rough path $(p>2)$ with approximating sequence $x_n\in C^{1-var}([0,T],\mathbb{R}^{d})$ converging in the $p$-variation topology.
\end{hypothesis}
\begin{remark}\label{remark:approximation}
Regarding the solution of equation \eqref{3.2} driven by $p$-rough path $x$, we will always first consider solutions of the equations
\begin{equation}\label{approximation of rough differential eqn}
y_{n}(t)=y(0)+\sum^{d}_{j=0}\int^{t}_{0}V_{j}(y_{n}(s))dx^{j}_{n}(s), \quad 0\le t \le T.
\end{equation}
Then $y_{n}(t)$ converges in p-variation to some $y \in C^{p-var}([0,T], \mathbb{R}^{d})$ and y is called the solution of the rough differential equation \eqref{3.2}. $($See \cite{Ly}[Theorem 3.7, Theorem 3.10] for details$)$. 
\end{remark}
Following the Taylor expansion idea in \cite{FBb}[Definition 2.2], we can define the Taylor expansion of $y(t)$ by iterative application of the change of variable formulas. We first apply this strategy to \eqref{approximation of rough differential eqn} with
$x_n\in C^{1-var}([0,T],\mathbb{R}^d)$.
Then, it is clear that we could also apply the iterative process for a $p$-rough path $x$ by the approximation argument mentioned above. 
\begin{definition}\label{taylor expansion}
The Taylor expansion associated with the differential equation $($\ref{3.2}$)$ is defined as
\begin{align*}
y_0+\sum^{\infty}_{k=1}g_{k}(t),
\end{align*}
where 
\begin{align*} 
g^{j}_{k}(t)=\sum_{|I|=k}P^{j}_{I}\int_{\triangle^{k}[0,t]}dx^{I}, \quad P^{j}_{I}=(V_{i_1}\cdots V_{i_k}\pi^{j})(y_0).
\end{align*}
We denote $\pi_j(y)=y^j$ as the $j$-th projection map. And we keep the convention that $V_{i_1}\cdots V_{i_k}= (V_{i_1}\cdots(V_{i_{k-2}}(V_{i_{k-1}} V_{i_k}))\cdots)$, which is non-associative and represents the iterative operation of the vector fields in order.
\end{definition}
\subsection{Convergence of the Taylor expansion}
\subsubsection{A general convergence result}
In the following, we follow the proof of\\ \cite{FB3}[Theorem 2.4] to get our convergence result.
\begin{proposition}\label{corollary1}
Let $y^{\varepsilon}(t)$ be the solution to the scaled differential equations of \eqref{3.2} as below:
\begin{align}\label{rescalled_de}
\begin{cases}
   dy^{\varepsilon}(t)=\displaystyle \sum^{d}_{i=0}\varepsilon V_{i}(y^{\varepsilon}(t))dx^{i}_{t}\\
   y^{\varepsilon}_0=y_0.
\end{cases}
\end{align}
Then for every $t\in [0,T]$, and for every fixed $\varepsilon$, the map $\varepsilon \rightarrow y^{\varepsilon}(t)$ is $C^{\infty}$. If $y_{n}^{\varepsilon}(t)$ denotes the solution to the scaled differential equations driven by $x_{n}(t)$, then $\displaystyle \frac{\partial y_{n}^{\varepsilon}(t)}{\partial \varepsilon}$ converges to $\displaystyle \frac{\partial y^{\varepsilon}(t)}{\partial \varepsilon}$ uniformly on $[0,T]$.
\end{proposition}
\begin{proof}
	According to \cite{FB3}[Prop. $2.3$], the proof relies on the smoothness of the solution of equation driven by $p$-rough path with respect to its initial condition, which follows from \cite{Pfriz}[Prop. 11.3]. 
\end{proof}
\begin{theorem}\label{general}
Let $y_0+\sum^{\infty}_{k=1}g_{k}(t)$ be the Taylor expansion associated with equation $($\ref{rough differential equation}$)$ which is defined in Definition \ref{taylor expansion}. There exists $T >0$, such that for every  $t\in (0, T)$, the series
\begin{align*}
 &\sum^{\infty}_{k=1}\|g_{k}(t)\|
 \end{align*}
 is convergent and 
 \begin{align*}
y(t)=y_0+ \sum^{\infty}_{k=1}g_{k}(t).
\end{align*}
\end{theorem}
\begin{proof}
Let us fix $\rho >0$. Consider the parameterized differential equation \eqref{rescalled_de}, due to the analyticity of the vector fields $V_i$'s, there exists strictly positive time
\[
T_C(\rho)=\inf_{\varepsilon, |\varepsilon| <\rho} \{ t \ge 0:  y^{\varepsilon}(t) \notin B(y_0,C/2) \}.
\]

By proposition \ref{corollary1}, for a fixed time $t \ge 0$, the map $\varepsilon \to y^{\varepsilon}(t)$ is $C^{\infty}$. Denote $y^{\varepsilon}_{n}(t)$ as the approximating sequence to the solution $y^{\varepsilon}(t)$, we know that $y^{\varepsilon}_{n}(t)\rightarrow y^{\varepsilon}(t)$ uniformly on $[0,T]$. It then follows that there exists $N>0$, such that when $n>N$ and for every $t\in [0,1]$, we have $\|y^{\varepsilon}_{n}(t)-y^{\varepsilon}(t)\|<C/2$. On the other hand, when $t<T_C(\rho)$, we have $\|y^{\varepsilon}(t)-y_{0}\|<C/2$. Therefore, when $n>N$ and $t<T_C(\rho)$, we get $y^{\varepsilon}_{n}(t)\in B(y_{0},C)$.\\
Now for the case where $n>N$, we could consider the following complex differential equations:
\[
\begin{cases}
   dy^{z}_{n}(t)=\displaystyle \sum^{d}_{i=0}z V_{i}(y^{z}_{n}(t))dx^{i}_{n}(t),\\
   y^{z}_n(0)=y_0,
\end{cases}
\]
where we take the holomorphic extension of the vector fields $V_i$'s to a ball around $y_0 \in  \mathbb R^n \subset \mathbb C^n$ and the above equation is well defined up to time $T_C(\rho)>0$. We claim that the map $z \to y^{z}_{n}(t)$ is not only $C^{\infty}$ smooth but also analytic. Differentiating with respect to $\bar{z}$, the integral expression immediately gives $\frac{\partial y^{z}_{n}(t)}{\partial \bar{z}}=0$ by the uniqueness of solutions of linear equations. That is, $z \to y_n^{z}(t)$ is analytic on the disc  $ |z| <\rho $. It follows by Proposition \ref{corollary1} that $\frac{\partial y^{z}(t)}{\partial \bar{z}}=0$. Therefore, the map $z\rightarrow y^{z}(t)$ is analytic and $y^{z}(t)$ admits a Taylor series of $z$ when $t<T_C(\rho)$. In particular, observe that we could consider the Taylor expansion
 of $f(z)\triangleq y^z(t) $ at $z=1$. Hence,
 \[
 f(1)=\sum_{n=1}^{\infty} \frac{f^{(n)}(0)}{n!},
 \]
 which directly gives us $ g_n(t)=\frac{f^{(n)}(0)}{n!}$ since $f(z)=x_0+\sum_{n=1}^{\infty}z^ng_n(t)$. Thus we can choose $\rho > 1$, which finishes the proof. 
\end{proof}
\subsubsection{Quantitative bounds}
\begin{lemma}\label{bound}
Let $\gamma$ be a constant such that $0<\gamma<\beta $. There exists a constant $K_{\beta,\gamma} >0$ such that for every $N \ge 0$ and $ x \ge 0$,
\[
\sum_{k=N+1}^{+\infty} \frac{\Gamma(k \gamma)}{\Gamma(k\beta)} x^{k-1} \le K_{\beta,\gamma}
\begin{cases}
e^{2 x^{\frac{1}{\beta-\gamma}}} , \quad \text{if } N = 0, \\
 \frac{x^N e^{2 x^{\frac{1}{\beta-\gamma}}}}{\Gamma((\beta-\gamma)N)}, \quad \text{if } N \ge 1,
\end{cases}
\]
where
\[
\Gamma(t)=\frac{1}{t}\Pi_{n=1}^{\infty}\frac{(1+\frac{1}{n})^t}{1+\frac{t}{n}}.
\]
\end{lemma}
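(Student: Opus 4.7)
The key observation is a gamma-ratio inequality coming from the beta function: for $a,b\geq 1$,
\[
B(a,b)=\int_0^1 t^{a-1}(1-t)^{b-1}\,dt\leq 1,
\]
which via $B(a,b)=\Gamma(a)\Gamma(b)/\Gamma(a+b)$ yields $\Gamma(a)/\Gamma(a+b)\leq 1/\Gamma(b)$. Setting $\delta:=\beta-\gamma>0$ and applying this with $a=k\gamma$, $b=k\delta$ gives
\[
\frac{\Gamma(k\gamma)}{\Gamma(k\beta)}\leq \frac{1}{\Gamma(k\delta)}
\]
for all $k$ with $k\gamma,\,k\delta\geq 1$. The finitely many small-$k$ terms on which this could fail contribute only a bounded multiplicative factor, so one has $\Gamma(k\gamma)/\Gamma(k\beta)\leq C_1/\Gamma(k\delta)$ for a constant $C_1=C_1(\beta,\gamma)$ and every $k\geq 1$.

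For $N=0$ this reduces the problem to bounding the Mittag-Leffler function
\[
\sum_{k=1}^{+\infty}\frac{x^{k-1}}{\Gamma(k\delta)}=E_{\delta,\delta}(x).
\]
Since $0<\delta<1$, the classical asymptotic $E_{\delta,\delta}(x)\sim \frac{1}{\delta}\,x^{(1-\delta)/\delta}\,e^{x^{1/\delta}}$ as $x\to +\infty$, together with boundedness on compact subsets of $[0,+\infty)$, delivers a uniform bound $E_{\delta,\delta}(x)\leq K' e^{2x^{1/\delta}}$ for all $x\geq 0$; the polynomial prefactor in the asymptotic is precisely why the exponent $2x^{1/\delta}$ is needed rather than $x^{1/\delta}$. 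For $N\geq 1$ the plan is to apply the gamma-ratio inequality a second time to peel off the $N$-dependence: writing $k=N+m+1$ with $m\geq 0$,
\[
\frac{1}{\Gamma((N+m+1)\delta)}=\frac{B(N\delta,(m+1)\delta)}{\Gamma(N\delta)\Gamma((m+1)\delta)}\leq \frac{1}{\Gamma(N\delta)\Gamma((m+1)\delta)}
\]
(valid once $N\delta,(m+1)\delta\geq 1$, with the remaining finitely many pairs absorbed into the constant). Summing then gives
\[
\sum_{k=N+1}^{+\infty}\frac{\Gamma(k\gamma)}{\Gamma(k\beta)}\,x^{k-1}
\;\leq\;\frac{C_2\,x^N}{\Gamma(N\delta)}\sum_{m=0}^{+\infty}\frac{x^m}{\Gamma((m+1)\delta)}
\;\leq\;\frac{K_{\beta,\gamma}\,x^N\,e^{2x^{1/(\beta-\gamma)}}}{\Gamma((\beta-\gamma)N)},
\]
by reusing the Mittag-Leffler estimate from the previous step.

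The main technical obstacle is the bookkeeping for the finitely many small indices at which $B(a,b)\leq 1$ fails to apply: one needs to verify that the polynomial-in-$x$ contributions coming from these boundary terms are dominated uniformly by $e^{2x^{1/\delta}}$. This is routine, since such contributions are bounded on any compact subset of $[0,+\infty)$ and $e^{2x^{1/\delta}}$ dominates any polynomial as $x\to +\infty$, so both regimes can be swallowed into a single constant $K_{\beta,\gamma}$. With this in hand the whole argument collapses into two successive applications of the beta/gamma inequality followed by a single appeal to the Mittag-Leffler asymptotic.
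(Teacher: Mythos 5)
Your proof is correct and follows essentially the same route as the paper: the paper's (unjustified) middle inequality $\Gamma((k+N+1)\gamma)\,\Gamma((\beta-\gamma)N)/\Gamma((k+N+1)\beta)\le K_{\beta,\gamma}/\Gamma((k+1)(\beta-\gamma))$ is exactly your double application of $B(a,b)\le 1$, and the paper's appeal to Lemma 7.7 of Nualart--R\u{a}\c{s}canu is precisely your Mittag--Leffler estimate $\sum_{k\ge 0}x^{k}/\Gamma((k+1)\delta)\le Ce^{2x^{1/\delta}}$. One small correction: in your second Beta-function step the exceptional set of pairs $(N,m)$ with $N\delta<1$ or $(m+1)\delta<1$ is infinite, not finite, but the required uniform bound still holds because $B$ is decreasing in each argument, so $B(N\delta,(m+1)\delta)\le B(\delta,\delta)<\infty$ for all $N\ge 1$ and $m\ge 0$.
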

\begin{proof}
We make the proof for $N \ge 1 $ and let the reader adapt the argument when $N=0$. We have
\begin{align*}
\sum_{k=N+1}^{+\infty} \frac{\Gamma(k \gamma)}{\Gamma(k\beta)} x^{k-1} &=x^N \sum_{k=0}^{+\infty} \frac{\Gamma((k+N+1) \gamma)}{\Gamma((k+N+1)\beta)} x^{k} \\
 & =\frac{x^N}{\Gamma((\beta-\gamma) N)} \sum_{k=0}^{+\infty} \frac{\Gamma((k+N+1) \gamma)\Gamma((\beta-\gamma)N)}{\Gamma((k+N+1)\beta)} x^{k}  \\
 &\le K_{\beta,\gamma} \frac{x^N}{\Gamma((\beta-\gamma)N)} \sum_{k=0}^{+\infty} \frac{x^k}{\Gamma( (k+1)(\beta-\gamma)) }.
\end{align*}
We conclude the proof by using the fact 
from \cite{Lyons} and for every $x \ge 0$, we have
\[
\sum^{\infty}_{k=0}\frac{x^{k}}{\Gamma((1+k)(\beta-\gamma))}\leq \frac{4e^{2}}{\beta-\gamma}e^{2x^{\frac{1}{\beta-\gamma}}}.
\] 
\end{proof}
Below is our result on the convergence rate of the Taylor expansion.
\begin{theorem}\label{convergence}
Let $p\geq 1$ and assume that there existse $M>0$ and $0<\gamma<\frac{1}{p}$ such that for every word $I \in \{ 0,\cdots, d\}^k$, we have
\begin{align}\label{criteria}
\|P_{I}\|\leq \Gamma(\gamma|I|)M^{|I|}.
\end{align}
For $r>1$, we define $\displaystyle T_{C}(r)=\inf\{t: \sum^{\infty}_{k=1}r^{k}\|g_{k}(t)\|\geq C/2\}$. Then, we conclude,
\begin{enumerate}
\item There exists $T_{C}(r)>0$ and when $t<T_{C}(r)$, we have
$y(t)=y_0+\sum^{\infty}_{k=1}g_{k}(t)$.
\item For any $N>1$, there exists a constant $Q_{p,\gamma,M,T}>0$ depending on the subscript variables such that when $t<T_{C}(r)$,
\[
\left\|y(t)-\left(y_0+\sum^{N}_{k=1}g_{k}(t)\right) \right\| \le Q_{p,\gamma,M,T}\frac{\left(MK\omega(0,T)^{1/p}\right)^{N}}{\Gamma((\frac{1}{p}-\gamma)N)}e^{2(MC\omega(0,T)^{\frac{1}{p}})^{\frac{p}{1-p\gamma}}}.
\]
where 
\[
 \omega(0,t)=\left(\sum^{[p]}_{j=1}\left\| \int dx^{\otimes j}\right\|^{1/j}_{\frac{p}{j}-var, [0,t]}\right)^{p}.
\]
\end{enumerate}
\end{theorem}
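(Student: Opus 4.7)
I would prove the theorem in three movements: a master bound on the Taylor coefficients, the identification of $X_t$ with its series on $[0,T_C(r))$ by comparison with the time $T(r)$ from Theorem \ref{general}, and the quantitative tail estimate. The main obstacle will be showing $T(r) \ge T_C(r)$.

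First I establish the cornerstone coefficient bound. By the definition $g_k^j(t) = \sum_{|I|=k} P_I^j \mathbf{y}^{k,I}_{0,t}$, the hypothesis $\|P_I\| \le \Gamma(\gamma|I|) M^{|I|}$ together with the tensor-norm compatibility gives $\|g_k(t)\| \le \Gamma(k\gamma) M^k \|\mathbf{y}^k_{0,t}\|$. Inserting Proposition \ref{gamma decay} yields the master bound
\[
\|g_k(t)\| \le \frac{1}{\alpha}\cdot\frac{\Gamma(k\gamma)}{\Gamma(k\beta)}(MK^\beta t^\beta)^k. \qquad(\ast)
\]
For part (1), applying Lemma \ref{bound} with $N=0$ and $x = rMK^\beta t^\beta$ gives a bound on $\sum_{k\ge 1}r^k\|g_k(t)\|$ which is continuous in $t$ and vanishes as $t \to 0^+$; hence $T_C(r) > 0$ and the Taylor series converges absolutely on $[0,T_C(r))$.

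For the identity $X_t = x_0 + \sum_k g_k(t)$ on $[0,T_C(r))$, I apply Theorem \ref{general} to the rescaled equation with $\rho = r > 1$; its proof delivers $T(r) > 0$ and, for $t < T(r)$ and $|\epsilon| < r$, the identity $X_t^\epsilon = x_0 + \sum \epsilon^k g_k(t)$. It thus suffices to show $T(r) \ge T_C(r)$. Suppose for contradiction that $T(r) < T_C(r)$. Sending $t \to T(r)^-$ in the identity (justified by uniform convergence of the series on $[0,T(r)]$, a consequence of $(\ast)$ since $T(r)<T_C(r)$) gives $X_{T(r)}^\epsilon = x_0 + \sum \epsilon^k g_k(T(r))$ for $|\epsilon| < r$; continuity of $\epsilon \mapsto X_{T(r)}^\epsilon$ from Corollary \ref{corollary1}, combined with the Weierstrass $M$-test on $|\epsilon|\le r$, extends this identity to $|\epsilon| \le r$. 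Therefore
\[
\sup_{|\epsilon|\le r}\|X_{T(r)}^\epsilon - x_0\| \le \sum_k r^k\|g_k(T(r))\| < C/2,
\]
the strict inequality holding because $T(r) < T_C(r)$. By joint continuity of $(t,\epsilon) \mapsto X_t^\epsilon$ (from the universal limit theorem) and compactness of $\{|\epsilon|\le r\}$, this strict bound persists on some $[T(r), T(r)+\delta]$, so $X_t^\epsilon \in B(x_0,C/2)$ throughout, contradicting the defining property of $T(r)$ as the infimum of exit times. Hence $T(r) \ge T_C(r)$, and evaluation at $\epsilon = 1$ gives part (1).

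For part (2), the master bound $(\ast)$ controls the tail:
\[
\Bigl\|X_t - x_0 - \sum_{k=1}^N g_k(t)\Bigr\| \le \sum_{k>N}\|g_k(t)\| \le \frac{1}{\alpha}\sum_{k>N}\frac{\Gamma(k\gamma)}{\Gamma(k\beta)}(MK^\beta t^\beta)^k.
\]
Applying Lemma \ref{bound} with the given $N \ge 1$ and $x = MK^\beta t^\beta$, factoring out one power of $x$, and using $t \le T$, bounds the right-hand side by $Q_{\beta,\gamma,M,T}(MK^\beta T^\beta)^N / \Gamma((\beta-\gamma)N) \cdot \exp\bigl(2(MK^\beta T^\beta)^{1/(\beta-\gamma)}\bigr)$ with $Q_{\beta,\gamma,M,T} = K_{\beta,\gamma} MK^\beta T^\beta/\alpha$, as required.
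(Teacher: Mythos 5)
Your proposal is correct in substance and reaches the same quantitative bounds, but it identifies $X_t$ with its series by a genuinely different mechanism than the paper. Both arguments share the same backbone for the estimates: the coefficient bound $\|g_k(t)\|\le \Gamma(k\gamma)M^k\|\mathbf{y}^k_{0,t}\|$ combined with Proposition \ref{gamma decay} and Lemma \ref{bound}, which is exactly how the paper proves $T_C(r)>0$ and the tail estimate in part (2). Where you diverge is the identity $X_t=x_0+\sum_k g_k(t)$ on $[0,T_C(r))$. The paper stays at the level of the smooth approximations: for $t<T_C(r)$ it shows $\sum_k\epsilon^k\|g_k(m)(t)\|<C$ for $m$ large, invokes the power-series argument of Lemma 2.5 in \cite{FB3} (expand the analytic vector fields along the partial sums, match powers of $\epsilon$, and use uniqueness for the ODE driven by $y(m)$) to conclude $X_t(m)=x_0+\sum_k g_k(m)(t)$, and then passes to the limit by uniform/dominated convergence. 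You instead treat Theorem \ref{general} as a black box for the limiting objects, extract from its proof the identity $X_t^{\epsilon}=x_0+\sum_k\epsilon^k g_k(t)$ on $[0,T(r))\times\{|\epsilon|<r\}$, and run a continuation-by-contradiction argument to show $T(r)\ge T_C(r)$. Your route is shorter and avoids redoing the vector-field power-series manipulation, at the cost of leaning on two points the paper does not need: (i) you use an intermediate conclusion of the \emph{proof} of Theorem \ref{general}, namely that the Taylor coefficients of the analytic map $\epsilon\mapsto X_t^\epsilon$ at $0$ are precisely the $g_k(t)$ of Definition \ref{taylor expansion}; and (ii) the joint continuity of $(t,\epsilon)\mapsto X_t^{\epsilon}$ on $[0,T]\times\{|\epsilon|\le r\}$ needed to propagate the strict bound $\|X^{\epsilon}_{T(r)}-x_0\|<C/2$ to a uniform window $[T(r),T(r)+\delta]$. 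Point (ii) is asserted rather than proved; it does follow from Corollary \ref{corollary1} together with a uniform bound on $\partial X_t^{\epsilon}/\partial\epsilon$ over compact $(t,\epsilon)$-sets, so it is a fixable gap rather than a flaw, but it should be spelled out since the whole contradiction hinges on the uniformity of $\delta$ over the compact disc $|\epsilon|\le r$. The paper's route avoids this entirely because it never needs to compare $T_C(r)$ with an exit time; its price is the external power-series lemma. Part (2) of your argument coincides with the paper's computation.
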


\begin{proof}
Let $y(t)$ be the solution of RDE (\ref{3.2}) with the associated Taylor series $y_0+\sum^{\infty}_{k=1}g_{k}(t)$, we know that there exists a sequence $y_{n}(t)$ that converges to $y(t)$ in $p$-variation topology. We denote by $y_{0}+\sum^{\infty}_{k=1}g_{k,n}(t)$ the Taylor series associated with RDE satisfied by $y_{n}(t)$. We first show that $T_{C}(r)> 0$. According to \cite{Ly}, we have
\begin{align*}
\sum_{k=1}^{\infty}r^{k}\|g_{k}(t)\|\leq &\sum_{k=1}^{\infty}r^{k}\sum_{I=(i_1,\cdots,i_{k})}\|P_{I}\|\left|\int_{\triangle^{k}_{[0,t]}}dx^{I}\right|\\
 \leq &\sum_{k=1}^{\infty}r^{k}\Gamma(\gamma k)M^{k}\|\int_{\triangle^{k}[0,t]}dx^{\otimes k}\|\\
   \leq& \sum_{k=1}^{\infty}B(\gamma,1/p-\gamma)\frac{(rMK\omega(0,T)^{\frac{1}{p}})^{k}}{\Gamma(k(\frac{1}{p}-\gamma))}.
\end{align*}
and $T_{C}(r)>0$ follows by the fact that $0<\gamma<1/p$. The same type of estimates also hold for $g_{k,n}(t)$ for any $n\in \mathbb{N}_+$, and it follows by the Dominated Convergence Theorem that  $y_0+\sum^{\infty}_{k=1}g_{k,n}(t)\rightarrow y_0+\sum^{\infty}_{k=1}g_{k}(t)$ uniformly on $[0,T]$. On the other hand, when $t<T_{C}(r)$, we have: $\sum^{\infty}_{k=1}\varepsilon^{k}\|g_{k}(t)\|<C/2$ for any $0<\varepsilon \le r$. Therefore, there exists $N>0$ such that when $n>N$, we have
$\sum^{\infty}_{k=1}\varepsilon^{k}\|g_{k,n}(t)\|<C.$
Following Definition \ref{taylor expansion}, for any $n>N$, we have  $\displaystyle y_{n}(t)=y_0+\sum^{\infty}_{k=1}g_{k,n}(t)$. Then the equality
$y(t)=y_0+\sum^{\infty}_{k=1}g_{k}(t)$ 
follows from the fact that $y_{n}(t)\rightarrow y(t)$ and  $\displaystyle y_0+\sum^{\infty}_{k=1}g_{k,n}(t)\rightarrow y_0+\sum^{\infty}_{k=1}g_{k}(t)$ uniformly on $[0,T_{C}(r)]$. 
Now let us consider the error estimate. When $t<T_{C}(r)$, we have
\begin{align*}
\left\|y(t)-\left(y_0+\sum^{N}_{k=1}g_{k}(t)\right) \right\|
&=\left\|\sum^{\infty}_{k=N+1}g_{k}(t)\right\|\leq \sum^{\infty}_{k=N+1}\sum_{|I|=k}\|P_{I}\|\left|\int_{\triangle^{k}[0,t]}dx^{I}\right|\\
&\leq \sum^{\infty}_{k=N+1} \Gamma(\gamma k)M^{k}\|\int_{\triangle^{k}[0,t]}dx^{\otimes k}\| \\
&\leq M\omega(0,T)^{\frac{1}{p}}K\sum^{\infty}_{k=N+1}\frac{\Gamma(k\gamma)}{\Gamma(\frac{k}{p})}\left(MK\omega(0,T)^{\frac{1}{p}}\right)^{k-1}\\
&\leq  Q_{p,\gamma,M,T}\frac{\left(MK\omega(0,T)^{\frac{1}{p}}\right)^{N}}{\Gamma((\frac{1}{p}-\gamma)N)}e^{2(MK\omega(0,T)^{1/p})^{\frac{p}{1-p\gamma}}}.
\end{align*}
The last inequality follows from Lemma \ref{bound}.
\end{proof}
\begin{example}
A non-trivial example where Theorem \ref{convergence} applies can be found in  \cite[section 2.4]{FB3}, where the following equation is considered on a connected Lie group $\mathbb G$ with its Lie algebra $\mathfrak{g}$.
\[
\begin{cases}
&dy(t)=\sum_{i=0}^dV_i(y(t))dX^i(t),\\
&y_0=e.
\end{cases}
\]
Where $e$ is the identity element of $\mathbb G$, and $V_0,V_1,\cdots, V_d\in \mathfrak g$ are analytic left invariant vector fields. The only difference is that we consider  $x$ as $p$-rough path with $p>2$ satisfying Hypothesis \eqref{hypo: vector-path} $(ii)$.
\end{example}
\section{Castell expansion and tail estimate for RDE.}
We first introduce the following concept for general Gaussian process and fractional Brownian motion (fBm).  For more details, we refer to \cite{FBGTO}[Section 2]. On a complete probability space $(\Omega,\mathcal{F},\mathbb{P})$, we denote $X(t)=(X^1(t),\cdots,X^d(t))$
as a continuous, centered Gaussian process with i.i.d. components. Its covariance function $R$ has the form of
\begin{align*}
R_{X}\left(\begin{array}{c}
s,t\\
u,v
\end{array}\right)\equiv
R^{st}_{uv}\equiv \mathbb{E}[(X^1_t-X^1_s)(X^1_v-X^1_u)].
\end{align*}
 The $2D$ $\rho$-variation of $R$ on rectangle $[s,t]^2$ is defined as
\begin{equation*}
V_{\rho}(R;[s,t]^2)\equiv\sup \lbrace (\sum_{i,j}|R^{t_i,t_{i+1}}_{s_j,s_{j+1}}|^{\rho})^{1/\rho}; (s_j),(t_i)\in \mathcal{D}([s,t])\rbrace.
\end{equation*}
 We denote $V_{\rho}(R)=V_{\rho}(R;[0,1]^2)$ for simplicity. In particular, $V_{\rho}(R)$ is a special case, recovered as $V_{\rho}=V_{\rho,\rho}
$ for the mixed right $(\gamma,\rho)$-variation
given in (\cite{PGGS}) defined as below: for $\gamma,\rho\geq1$,  
\begin{align*} 
V_{\gamma,\rho}(R_{X};[s,t]\times [u,v]):=\sup_{\substack{(t_{i})\in\mathcal{D}([s,t])\\
(t_{j}^{\prime})\in\mathcal{D}\left(\left[u,v\right]\right)
}
}\left(\sum_{t'_{j}}\left(\sum_{t_{i}}\left|R_{X}\left(\begin{array}{c}
t_{i},t_{i+1}\\
t_{j}^{\prime},t_{j+1}^{\prime}
\end{array}\right)\right|^{\gamma}\right)^{\frac{\rho}{\gamma}}\right)^{\frac{1}{\rho}}.\label{eq:mixed_var}
\end{align*}
 The Cameron-Martin space $\bar{\mathcal{H}}$ associated with the Gaussian process $X(t)$ is defined to be the completion of the linear space of functions of the form
\[
\sum_{i=1}^{n}a_{i}R\left(  t_{i},\cdot\right)  ,\quad a_{i}\in%
\mathbb{R}
\text{ and }t_{i}\in\left[  0,T\right]  ,
\]
with respect to the inner product induced by 
$
\left\langle R\left(  t_{i},\cdot\right)  ,R\left(  s_{j},\cdot\right)  \right\rangle _{\mathcal{H}}=R\left(  t_{i},s_{j}\right)  .
$
The embedding coefficient from the Cameron-Martin space $\bar{\mathcal{H}}$ to the space of continuous functions with finite $q$-variation is defined as $C_{emb}(T)$ with $1/p+1/q>1$, such that $\forall$ $h\in \bar{\mathcal{H}}$,
\[
|h|_{q-var;[0,T]}\leq C_{emb}(T)|h|_{\mathcal{\bar{H}}}.
\]
In particular, $C_{emb}(T)=\sqrt{V_{1,\rho}(R_{X};J\times J)}$ with $J=[0,T]$. See details about $C_{emb}(T)$ in \cite{PGGS}.

As a special Gaussian process, we define $B(t)=(B^1(t),\cdots,B^d(t))$ a d-dimensi-\\onal fractional Brownian motion (fBm) indexed by $[0,1]$ with Hurst parameter $H>1/4$, if the components $B^i$ are $i.i.d.$ and each component $B^i$ is a centered Gaussian process satisfying
\begin{equation*}
\mathbb{E}[(B^i(t)B^i(s))]=\frac{1}{2}(|t|^{2H}+|s|^{2H}-|t-s|^{2H}),
\end{equation*}
for $s,t\in[0,1]$. The embedding coefficient defined above for fBm has the form of $(C_{emb}(t))^2=t^{2H}.$ 

   \begin{remark}
The results from \cite{Pfriz}[Theorem 15.42] tells us that for $X(t)$ as a continuous, centered Gaussian process, if $X(t)$ has finite $2D$-$\rho$-variation for $\rho\in[0,2)$, then $X(t)$ has a lift to a geometric $p$-rough path provided $p>2\rho$. Moreover, there is a unique natural lift which is the limit, in the $p$-var topology, of the canonical lift of piecewise linear approximations to $X$. The same type of results can also be found for fBm in \cite{Qian}. Thus according to Remark \ref{remark:approximation}, by using linear approximation to general Gaussian process and fBm, the results from the previous section applies to RDE \eqref{3.2} by changing the driving path to be general Gaussian process and fBm.
   \end{remark}
\subsection{Asymptotic expansion and tail estimate for Castell expansion.}
We present Theorem \ref{thm:gaussian asymptotic and tail estimate} for the scaled differential equations \eqref{scaled gaussian sde} below driven by general Gaussian process. The argument for fBm with $H>1/4$ follows similarly in the remark below. To be consistent with the Taylor expansion in the previous section, we consider a more general scaling RDE as follows, 
\begin{equation}
\label{scaled gaussian sde}
dy^{\varepsilon}(t)=V_0(\varepsilon, y^{\varepsilon}(t))dt+ \sum_{i=1}^d\varepsilon V_i(y^{\varepsilon}(t))dx^i(t),
\end{equation} 
where we denote $V_0(\varepsilon, y^{\varepsilon}(t))$ as a general drift term with any potential scaling power, namely, we have $ V_0(\varepsilon, y^{\varepsilon}(t))=\varepsilon^{\mathcal S}V_0(y^{\varepsilon}(t))$. If the driving process $x$ is fraction Brownian motion with Hurst parameter $H$, then the scaling power $\mathcal S=H^{-1}.$

 We then introduce the following general order definition.
\begin{definition}
	For every word $I\in\{0,1,\cdots,d\}^k$, for $k\in\mathbb Z_+$, we denote $\sharp(I)$ as the number of zeros in word $I$. Then, for any scaling power $\mathcal S$, we denote $\mathcal O(I)$ as the order of $I$, defined as $\mathcal O(I):=\mathcal S*\sharp(I)+(|I|-\sharp(I))$, where $|I|=k$ is the size of $I$. In particular, if $\mathcal O(I)$ is not an integer, we will take its integer part, i.e. $\mathcal O(I):=[\mathcal O(I)]$.
\end{definition}
\begin{theorem}\label{thm:gaussian asymptotic and tail estimate}
Assume that Hypothesis \ref{hypo: vector-path}$(i)$ is in force. If $x(t)=(x^1(t),\cdots,\\x^d(t)):[0,T]\rightarrow \mathbb{R}^d$ is continuous, centered Gaussian process with i.i.d. components and finite 2-dimensional $\rho$-variation, for $\rho\in[0,2)$, and $x^0(t)=t$. Let $y^{\varepsilon}(t)$ denote the solution of the scaled differential equation \eqref{scaled gaussian sde} with initial value $y_0$. 
Then there exists a random time $T_C(r)>0$, such that for every $\varepsilon<r$, every positive integer $N\in \mathbb Z_+$ and every $t<T_C(r)$, we have
\begin{equation}\label{castel general form}
y^{\varepsilon}(t)=\exp\left(\sum_{I:\mathcal O(I)\le N} \varepsilon^{\mathcal O(I)}\Lambda_I(x)_tV_I\right)(y_0)+\varepsilon^{N+1}R_{N+1}(\varepsilon,t),~a.s.,
\end{equation}
where we denote $\exp(W)(y_0)$ as the time one map of the 
flow generated by a general vector field $W$ starting at $y_0$. 
Furthermore, there exist constants $\alpha,c>0$, such that for every $\tau\in(0,T_C(r))$ and every $\xi\geq 1,$
\begin{equation}
\label{condition:gaussian tail estimate }
\mathbb{P}\Big(\sup_{t\in [0,\tau]}\|R_{N+1}(\varepsilon,t)\|\geq \xi;\quad \tau < T_C(r)\Big)\leq \exp\left(-c\frac{\xi^{\alpha}}{(C_{emb}(\tau))^2}\right).
\end{equation}
\end{theorem}
\begin{remark} If we take the same scaling SDE as we did in Section 2, where $\mathcal S=1$, then
	\begin{equation*}
y^{\varepsilon}(t)=\exp\left(\sum_{k=1}^{N}\varepsilon^{k}\sum_{I: |I|=k}\Lambda_I(x)_tV_I\right)(y_0)+\varepsilon^{N+1}R_{N+1}(\varepsilon,t),~a.s..
\end{equation*}

\end{remark}
\begin{remark}\label{fBm remark}
When the driving path $ x:[0,T]\rightarrow \mathbb{R}^d$ is a fBm with $H>1/4$. The above Theorem \ref{thm:gaussian asymptotic and tail estimate} can be simplified with $(C_{emb}(t))^2=t^{2H}$ and $\alpha=(2H+1)\wedge 2.$ Let $\varepsilon=1$, we have $x^{\varepsilon}(t)=\varepsilon^{-1}x(\varepsilon^{H^{-1}}t)$, then following \cite{Aze}[Section 5.2] $($refer \cite{Tindel} for the scaling property$)$ we have,
\begin{align}
\label{scaling case}
&y(t)=\exp\left(\sum_{I:\mathcal O(I)\le N }\Lambda_I(x)V_I\right)(y_0)+t^{H(N+1)}R_{N+1}(t),      
\end{align}
and 
\begin{align}\label{scaling case tail estimate}
\mathbb{P}\Big(\sup_{t\in [0,\tau]}\|t^{(N+1)H}R_{N+1}(t)\|\geq \xi\tau^{(N+1)H};\quad \tau<T_C(r)\Big)\leq \exp\Big(-\frac{c_H \xi^{(2H+1)\wedge 2}}{\tau^{2H}}\Big).
\end{align}
\end{remark}
\begin{remark}
The same type of differential equations \eqref{scaled gaussian sde} driven by fBm with $H>1/4$ is also considered in \cite[Theorem 9]{FB1}, they proved the representation of  $\mathbb E(f(y(t)^{y_0}))=P_t f(y_0)$ at small time. The current results and those in \cite{FB1} are closely related to cubature methods \cite{LV02}.
\end{remark}
\subsection{Proof of the main result}
We first present the following lemma to prepare us ready for the proof of the main results. 
\begin{lemma} \label{lemma expansion}
Denote $y^{\varepsilon}(t)$ as the solution of the scaled RDE \eqref{scaled gaussian sde} driven by path $x$ with bounded $p$-variation. Let $N\ge 1$. Consider the function $F:\mathbb R\rightarrow \mathbb R^n$ defined by 
 $$F(\varepsilon)\triangleq \exp\Big(\sum_{I: \mathcal O(I)\le N}\varepsilon^{\mathcal O(I)} \Lambda_I(x)_tV_I  \Big)(y_0),\quad \varepsilon\in \mathbb R, $$
 where $\Lambda_I(x)_t$ is defined in \eqref{lambda notation}. Then for each $1\le k\le N$,
\[
\frac{F^{(k)(0)}}{k!}=\sum_{I:\mathcal O(I)=k}P_I\cdot \int_{\triangle^{k}[0,t]}dx^{I},
\]
where $P_I=(V_{i_1}\cdots V_{i_{k}}I)(y_0)$ is defined in Definition \ref{taylor expansion}.
\end{lemma}
\begin{proof}
Similar to \cite{FB3}[Theorem 2.11], combing with the new scaling order $\mathcal O(I)$, the solution of equation \eqref{scaled gaussian sde} can be represented as 
$$y^{\varepsilon}(t)=\exp\Big(\sum_{I:\mathcal O(I) =1}^{\infty}\varepsilon^{\mathcal O(I)}\Lambda_I(x)_tV_I  \Big)(y_0),\quad \varepsilon\in \mathbb R, $$
which equals $\sum_{k=0}^{\infty}\varepsilon^kg_k(t)$ according to the Taylor expansion in Definition \ref{taylor expansion}. For each $N\ge 1$, take truncation of the order of $\varepsilon$ up to $N$, by differentiating both quantities and taking $\varepsilon=0$, we thus complete the proof.  
\end{proof}

Next, we recall the following definition introduced in \cite{Aze}.\\ 
$\mathbf{Definition~ of~} \omega(\alpha,c,\zeta)$.\\
Let $\zeta$ be a random time, $y(t)$ is said to be in the family of $\omega(\alpha,c,\zeta)$ if and only if: for every $R\ge c$,
\begin{equation}
\label{omega}
\mathbb{P}\Big(\sup_{0\leq s\leq t}\|y(s)\|\geq R;~t<\zeta\Big)\leq \exp\Big(-\frac{R^{\alpha}}{ct}\Big).
\end{equation}
With the definition of $\omega(\alpha,c,\zeta)$ defined above, the following properties (see \cite{Aze}) hold true:\\
\label{P1}
$\mathbf{(P1)}$.  Let $\phi(t)$ be a continuous process on $[0,\zeta]$ with values in the space of the polynomials of degree less than $q$, in $p$ Euclidean variables, with coefficients bounded by some constant $A$ on $[0,\zeta]$. The image of $\mathcal{W}(\alpha_1,c_1,\zeta)\times\cdots\times\mathcal{W}(\alpha_p,c_p,\zeta)$ by the mapping 
\begin{equation}
(X^1,\cdots,X^p)\rightarrow Y,\qquad y(t)=\phi_t(X^1(t),\cdots, X^p(t)),
\end{equation}
is in some $\mathcal{W}(\alpha,c,\zeta)$ ,with $\alpha,c$ determined by $A,p,q,\alpha_1,c_1,\cdots,\alpha_p,c_p.$\\
\label{P2}
$\mathbf{(P2)}.$ If $\zeta$ is bounded by some fixed $T$, the image of $\mathcal{W}(\alpha,c,\zeta)$ by the mapping $X\rightarrow Y$ with $Y=\int_0^tX(u)dB(u)$, is in some $\mathcal{W}(\alpha,c,\zeta)$. This is also true for the mapping $X\rightarrow Y$, with $ y(t)=\int_0^tX(u)du$. The driving signal $B(u)$ can be Brownian motion, fBm with $ H>1/4$ and general Gaussian process with finite $2D~\rho-$variation according to Proposition \ref{2.10 FB} and Proposition \ref{gaussian tail} in the Appendices.

Given the above definition, according to Theorem \ref{general}, we know that equation (\ref{scaled gaussian sde}) admits a unique solution in the rough path sense denoted as $y^{\varepsilon}(t)$, and the solution $y^{\varepsilon}(t)$ can be represented as below
\begin{equation}
\label{$g_k(t)$}
y^{\varepsilon}(t)=y_0+\sum_{k=1}^{\infty
}\varepsilon^kg_k(t)=y_0+\sum_{k=1}^{N}\varepsilon^{k}g_k(t)+\varepsilon^{N+1} M_{N+1}(\varepsilon,t).
\end{equation}

We then introduce the map $\phi$ (see \cite{Cast}) defined for an appropriate $d$ by 
\[
\phi:\mathbb{R}^d\rightarrow \mathbb{R}^n\quad \text{with}\quad 
(\Lambda_I)_{\mathcal O(I)<N+1} \mapsto \exp \Big(\sum_{I: \mathcal O(I)<N+1}\Lambda_IV_I\Big)(y_0).
\]

 It is clear that by Taylor expansion, we have the following:
\begin{align}
\label{$h_k(t)$}
\exp \Big(\sum_{\mathcal O(I)=1}^{N}\varepsilon^{\mathcal O(I)}\sum_{I}\Lambda_I(t)V_I\Big)(y_0)&=\phi((\varepsilon^{\mathcal O(I)}\Lambda_I)_{\mathcal O(I)<N+1})\nonumber \\
&=y_0+\sum_{k=1}^{N}\varepsilon^k h_k(t)+\varepsilon^{N+1} P_{N+1}(\varepsilon,t).
\end{align}
According to Lemma \ref{lemma expansion}, we directly get the following lemma.	
\begin{lemma} For any $N\ge 1$, we have 
\label{lemma g-k=h-k}
 $g_k(t)=h_k(t)$, for every $k=1,2,\cdots,N$.
\end{lemma}
Furthermore, we have the following tail estimates. 
\begin{lemma}
\label{lemma M and g}
There exists some random time $\zeta>0$, such that 
$g_j(t)\in\omega(\alpha_j,c_j,\zeta)$, $j=1,\cdots,N$, and $M_{N+1}(\varepsilon,t)\in \omega(\alpha_M,c_M,\zeta)$, namely
\begin{equation}
\label{g_j}
\mathbb{P}\Big(\sup_{t\in [0,\tau]}\|g_j(t)\|\geq \xi;~\tau\leq \zeta\Big)\leq \exp \Big(-c_j \frac{\xi^{\alpha_j}}{(C_{emb}(\tau))^2}\Big),
\end{equation}
and 
\begin{equation}
\label{M_N gaussian}
\mathbb{P}\Big(\sup_{t\in [0,\tau]}\|M_{N+1}(\varepsilon,t)\|\geq \xi;~ \tau<\zeta\Big)\leq \exp\Big(-c_M\frac{\xi^{\alpha_M}}{(C_{emb}(\tau))^2}\Big),
\end{equation}
for some constants $\alpha_M,\alpha_j,c_M,c_j$, for $j=1,\cdots,N$. 
\end{lemma}
\begin{lemma}
\label{lemma P} There exists some random time $\zeta>0$, such that
$P_{N+1}(\varepsilon,t)\in \omega(\alpha_P,c_P,\zeta)$, i.e.
\begin{equation}
\label{P_N gaussian}
\mathbb{P}\Big(\sup_{t\in [0,\tau]}\|P_{N+1}(\varepsilon,t)\|\geq \xi;~ \tau<\zeta\Big)\leq \exp\Big(-c_P\frac{\xi^{\alpha_P}}{(C_{emb}(\tau))^2}\Big),
\end{equation}
for some constants $\alpha_P$ and $c_P$.
\end{lemma}
With the aforementioned lemmas, we are now ready to prove our main theorem.\\
$\mathbf{Proof~of~ theorem ~\ref{thm:gaussian asymptotic and tail estimate} }$.

\begin{proof}
Based on Lemma \ref{lemma g-k=h-k}, subtracts (\ref{$g_k(t)$}) by (\ref{$h_k(t)$}) we have the following:
\begin{equation}
\label{result}
y^{\varepsilon}(t)=\exp \Big(\sum_{\mathcal O(I)=1}^{N}\varepsilon^{\mathcal O(I)}\sum_{I}\Lambda_I(t)V_I\Big)(y_0)+\varepsilon^{N+1} R_{N+1}(\varepsilon,t).
\end{equation}
Then  according to Lemma \ref{lemma M and g} and Lemma \ref{lemma P}, we deduce almost surely that  $R_{N+1}(\varepsilon,t)=M_{N
+1}(\varepsilon,t)-P_{N+1}(\varepsilon,t)$, which means $R_{N+1}(\varepsilon,t) \in \omega(\alpha_R,c_R,\zeta)$, for some random time $\zeta$ and some constants $\alpha_R$, $c_R$. More precisely, we have 
\[
\mathbb{P}\Big(\sup_{t\in [0,\tau]}\|R_{N+1}(\varepsilon,t)\|\geq \xi;~\tau <\zeta\Big)\leq \exp\Big(-c_R\frac{\xi^{\alpha_R}}{(C_{emb}(\tau))^2}\Big),
\]
where $\alpha_R,c_R$ depend on $\alpha_M,\alpha_P,c_M,c_P.$ The proof is thus completed.
\end{proof}

\begin{remark}
The above proof applies to the fBm case by using Proposition \ref{2.10 FB} $($see appendices$)$ instead of Proposition \ref{gaussian tail} with $(C_{emb}(t))^2=t^{2H}$ for $H>1/4$. 
\end{remark}

\par\bigskip\noindent
{\bf Acknowledgment.} We would like to thank our advisor Professor Fabrice Baudoin for suggesting the
problem and for all his help during our work.

\begin{appendices}
Consider the equation below,
\begin{equation}
\label{X(t)}
Y(t)=y_0+\int_0^tV_0(Y(s))ds+\sum_{i=1}^d\int_0^tV_i(Y(s))dx_s^i.
\end{equation}
We have the following propositions.
\begin{proposition}\label{app: fBm}
\label{2.10 FB}\cite{FB2}[Proposition 2.10] For some constant $c_H$, we have 
\begin{equation}
\label{y(t)}
\mathbb{P}\Big(\sup_{t\in[0,T]}|Y(t)-y_0|\geq \xi\Big)\leq \exp\Big(-\frac{c_H\xi^{(2H+1)\wedge 2}}{t^{2H}}\Big),
\end{equation}
where $Y(t)$ is the solution of \eqref{X(t)} together with the driving path and the vector fields satisfy the assumptions in Remark \ref{fBm remark}.
Estimate (\ref{y(t)}) generalize the Brownian motion case in \cite[Appendix.2]{Aze} to fBm with $H>1/4$. 
\end{proposition}
\begin{proposition}\label{app: gaus}
\label{gaussian tail}
The following inequality holds true,
\begin{align} 
\label{tilde X}
\mathbb{P}\Big(\sup_{t\in[0,T]}|\tilde Y(t)-y_0|\geq \xi\Big)\leq \exp\Big(-c\frac{\xi^{2/q}}{(C_{emb}(t))^2}\Big),
\end{align}
where $\tilde Y(t)$ is the solution of equation (\ref{X(t)}) together with the driving path and the vector fields satisfy the assumptions in Theorem \ref{thm:gaussian asymptotic and tail estimate}.
\end{proposition}
\begin{remark}
The proof was given in the previous version of our paper and we now omit the proof for conciseness and refer to \cite{FBGTO}[Proposition 3.7] for details. To make the connections, our $\frac{2}{q}$ is the same as $1+\frac{1}{\rho}$ and our $C_{emb}(t)$ is the same as $\kappa_t$ in \cite{FBGTO}.
\end{remark}


\begin{proof}[\textbf{Proof of lemma \ref{lemma M and g}}]
We follow the strategy presented in Azencott \cite{Aze}, where the driving process is Brownian motion. We
consider the general scaling rough differential equation as follows,
\begin{align}
\label{Aze}
dy^{\varepsilon}(t)&=V_0(\varepsilon,y^{\varepsilon}(t))dt+ \sum_{i=1}^d\varepsilon V_i(y^{\varepsilon}(t))dx^i(t),\nonumber\\
&=b(\varepsilon,y^{\varepsilon}(t))dt+\varepsilon \cV(y^{\varepsilon}(t))dx(t),
\end{align}
where we denote $\cV=[V_1,\cdots,V_d]\in\mathbb R^{n\times d}$, $V_0(\varepsilon, y^{\varepsilon}(t) )=b(\varepsilon, y^{\varepsilon}(t) )$ and $x(t)=[x^1(t),\cdots,x^d(t)]\in \mathbb R^{d\times 1}$, and we denote $y_0\in\mathbb R^n$ as the initial value of $y^{\varepsilon}(t).$ 
 With a little abuse of notation, we define
\begin{align}
\label{sigma ij}
\cV_{ij}(t)=\frac{1}{i!j!}\frac{\partial^{i+j}\cV}{\partial \varepsilon^i \partial x^j}(0,t,y_0),\quad 
b_{ij}(t)=\frac{1}{i!j!}\frac{\partial^{i+j}b}{\partial \varepsilon^i \partial x^j}(0,t,y_0).
\end{align}
In particular, for each $j\geq 1$, $y\in \mathbb{R}^n$, $\cV_{ij}(t)y^j$ and $b_{ij}(t)y^j$ are valued on the point $(y,\cdots,y)\in (\mathbb{R}^n)^{\otimes j}$. The Taylor series of $\cV(t,\cdot)$ and $b(t,\cdot)$ at $0$ and $y_0$ are  written as 
\begin{align}
\label{sigma b}
\cV(t,\varepsilon,y)&=\sum_{0\leq i,j}\varepsilon^i\cV_{ij}(t)\cdot(y-y_0)^j=\sum_{i+j\leq N}\varepsilon^i\cV_{ij}(t)\cdot (y-y_0)^j+\varepsilon^{N+1}\nu_{N+1},\\
\label{sigma b 2}
b(t,\varepsilon,y)&=\sum_{0\leq i,j}\varepsilon^ib_{ij}(t)\cdot(y-y_0)^j=\sum_{i+j\leq N}\varepsilon^ib_{ij}(t)\cdot (y-y_0)^j+\varepsilon^{N+1}v_{N+1}.
\end{align}
Thus, the solution $y^{\varepsilon}(t)$ admit the following Taylor expansion, 
\begin{align}\label{taylor b v}
	dy^{\varepsilon}(t)&=\Big(\sum_{i+j\leq N}\varepsilon^ib_{ij}(t)\cdot (y-y_0)^j+\varepsilon^{N+1}v_{N+1}\Big)dt\nonumber \\
	&+\Big(\sum_{i+j\leq N}\varepsilon^i\cV_{ij}(t)\cdot (y-y_0)^j+\varepsilon^{N+1}\nu_{N+1}\Big)dx(t).
\end{align}
Recall from Definition \ref{taylor expansion}, we have 
\begin{equation}
\label{gk(t)}
y^{\varepsilon}(t)=y_0+\sum_{k=1}^{\infty
}\varepsilon^kg_k(t)=y_0+\sum_{k=1}^{N}\varepsilon^{k}g_k(t)+\varepsilon^{N+1} M_{N+1}(\varepsilon,t).
\end{equation}

The general idea is to match the $\varepsilon$ terms with the same order for the two Taylor expansion \ref{taylor b v} and \ref{gk(t)} as shown above. Then to show $g_j(t)\in\omega(\alpha_j,c_j,\zeta)$, $j=1,\cdots,N$, and $M_{N+1}(\varepsilon,t)\in \omega(\alpha_M,c_M,\zeta)$ is equivalent to show the same properties for the terms appearing in the Taylor expansion of $b(\cdot,t)$ and $\cV(\cdot,t)$. The rest then follows from induction. We break the proof in the following five steps. 

\noindent\textbf{Step 1-matching two Taylor expansion:} Compare the two Taylor expansion with the same power for $\varepsilon$, one has 
\begin{align}
\label{2.1(4)}
dg_1(t)=&(b_{01}(t)g_1(t)+b_{10}(t))dt+(\cV_{01}(t)g_1(t)+\cV_{10}(t))dx(t),\\
\label{2.1(4)'}
dg_{j+1}(t)=&[b_{01}(t)g_{j+1}(t)+K_{j+1}[g_1,\cdots,g_j](t)]dt\nonumber \\
&+[\cV_{01}g_{j+1}+S_{j+1}[g_1,\cdots,g_j](t)]dx(t),
\end{align}
where we denote $S_{j+1}[\cdots](t)$ and $K_{j+1}[\cdots](t)$ as random processes whose values belong to the space of polynomials of $(\mathbb{R}^n)^{\otimes j}$ in $\mathbb{R}^{n\times d}$ and $\mathbb{R}^n$ respectively. By induction, we have 
\begin{align}
S_{1}(t)&=\cV_{10}(t),\quad \text{and}\quad K_{1}(t)=b_{10}(t),\nonumber \\
S_{j+1}[y_1,\cdots, y_j](t)&=\cV_{j+1,0}(t)+\sum_{H(j+1)}\cV_{ir}\cdot [y_{p_1},\cdots,y_{p_r}],\nonumber \\
\label{2.1 (5)}
K_{j+1}[y_1,\cdots, y_j](t)&=b_{j+1,0}(t)+\sum_{H(j+1)}b_{ir}(t)\cdot [y_{p_1},\cdots,y_{p_r}],
\end{align}
where  $H(j+1)$ denote a linear transformation for the integers 
 $(i,r,y_{p_1},\cdots,y_{p_r})$ which satisfies $1+p_1+\cdots+p_r=j+1$ for $0\leq i\leq j$ ,$1\leq r\leq j+1$, $1\leq p_1,\cdots,p_r\leq j$. 
 
 \noindent\textbf{Step 2--transformation of Taylor series:} Given the equations for each term of the Taylor series in \textbf{Step 1}, we now consider linear transformation of the following type:
 \[
 h_j(t)=Q_t g_j(t), \quad j=1,2,\cdots,N,
 \]
 where $Q_t$ denotes the inverse of the Jacobian process, denoted as $J_t$, which satisfies the following equations 
 \begin{align}
\label{K_t}
&dQ_t(y(t))=-Q_t(y(t))[\frac{\partial b}{\partial y_0}(y(t))dt+\frac{\partial \cV}{\partial y_0}(y(t))dx(t)];\\
\label{J_t}
&dJ_t(y(t))=[\frac{\partial b}{\partial y_0}(y(t))dt+\frac{\partial \cV}{\partial y_0}(y(t))dx(t)]J_t(y(t)).
\end{align}
The existence of the Jacobian process $J_t$ and its inverse $Q_t$ follows from \\ \cite{Cass}[Corollary 4.6] in the weak geometric rough path sense satisfying $Q_tJ_t=J_tQ_t=\textbf{Id}$. Next, we consider the process $h_t=Q_ty(t)$. Then follow the product rule, we have $dh_t=Q_tdy(t)+dQ_t y(t)$, which gives us
\begin{align}
\label{h_t}
dh_t=&Q_t[b(y(t))dt+\cV(y(t))dx(t)]-Q_t(y(t))[\frac{\partial b}{\partial y_0}(y(t))dt\nonumber \\
&+\frac{\partial \cV(y(t))}{\partial  y_0}(y(t))dx(t)]\cdot y(t).
\end{align}
Similar to the equations in \eqref{2.1(4)}, for the Taylor expansion terms of $h(t)$, we have 
\begin{equation}
\label{h_j+1}
dh_{j+1}(t)=\tilde K_{j+1}[h_1,\cdots,h_j](t)dt+\tilde S_{j+1}[h_1,\cdots,h_j](t)dx(t),
\end{equation}
where $\tilde S_{j+1}[\cdots](t)$ and $\tilde K_{j+1}[\cdots](t)$ denote random processes whose values belong to the space of polynomial maps.
Following (\ref{2.1(4)}), (\ref{2.1(4)'}), $h_j(t)=Q_tg_j(t)$ and (\ref{h_t}) we then have (by homogeneous property)
\begin{equation}
\label{tilde K_j+1}
\tilde S_{j+1}(t)\circ (Q_t)^{\otimes j}=Q_tS_{j+1}(t),~~~~
\tilde K_{j+1}(t)\circ (Q_t)^{\otimes j}=Q_tK_{j+1}(t).
\end{equation}

\noindent\textbf{Step 3--higher order term transformation:} Recall from our notation convention, we have $dQ_t=-Q_t\cdot [\cV_{01}(t)dx(t)+b_{01}dt]$. For notational convenience, we denote $z_0=0,z_j=\sum_{1\leq k\leq j}\varepsilon^k g_k$ and $y^{\varepsilon}(t)-y^0=z_j+\varepsilon^{j+1}M_{j+1}.$
We then get ,
\begin{align}
dM_{N+1}=&(\cV_{01}M_{N+1}+\Phi_N\cdot M_N+\nu_{N+1})dx(t)\nonumber\\
& +(b_{01}M_{N+1}+\Psi_N\cdot M_N+\upsilon_{N+1})dt,
\end{align}
where $\Phi_N=\Phi_N[\varepsilon,\tilde \cV(t),M_1(t),g_1(t),\cdots,\varepsilon^{N-1}g_{N}(t)]$, and $\Psi_N=\Psi_N[\varepsilon,\tilde b(t),M_1(t)\\,g_1(t),\cdots,\varepsilon^{N-1}g_{N}(t)]$ with $\tilde \cV=[\cV_{ij}(t)]_{i+j\leq N}$ and $\tilde b=[b_{ij}(t)]_{i+j\leq N}$. In particular, $\Phi_N$ and $\Psi_N$ are polynomial maps with constant coefficients from $\mathbb{R}^n$ to $L(\mathbb{R}^d,\mathbb{R}^n)$ and $\mathbb{R}^n$. By applying linear transformation above, similar to  \eqref{h_t}, we define $r_{N+1}(t)$ as below
 \begin{align}
 \label{r_N+1}
 r_{N+1}(t)&=Q_tM_{N+1}(t),
\end{align}
we further differentiate $r_{N+1}$ and denote as 
\begin{align}
	dr_{N+1}=G_Ndt+F_Ndx(t),
\end{align}
which directly implies 
\begin{align}
Q^{-1}F_N&=\Phi_N\cdot M_N+\nu_{N+1},\quad 
Q^{-1}G_N=\Psi_N\cdot M_N +\upsilon_{N+1}.
\end{align}
In particular, $F_N(t)$ and $G_N(t)$ are polynomial functions with constant coefficients of $\varepsilon,~\tilde \cV(t),~\tilde b(t),$
$~M_{1}(t),~g_1(t),~\cdots,
~g_{N-1}(t),~Q_t,~Q^{-1}_t,~\nu_{N+1}(t),~\upsilon_{N+1}(t),~r_N(t)$. 

\noindent\textbf{Step 4--estimate of $g_j$:} Based on the construction from previous three steps. We first show $g_j(t)\in \omega(\alpha_i,c_i,\zeta)$, for $j=1,\cdots, N$. Then we show that $M_{N+1}\in \omega(\alpha_M,c_M,\zeta)$  by using induction. 

Let $U$ be an open set in $\mathbb{R}^n$ where the solution lives on. Denote $K$ as a compact subset of $U$. Denote $T_K^{\varepsilon}$ as the life time of $y^{\varepsilon}(t)$ on a compact set $K$. For a fixed time $T$, take $\zeta=T\wedge T_K^{0}$. Following Proposition \ref{gaussian tail}, the random matrix $Q_t, Q_t^{-1}=J_t$, the solution of the linear equation \eqref{K_t} and \eqref{J_t} with bounded coefficients on [0,$\zeta$], belong to $\omega(\alpha,c,\zeta)$ for some constant $\alpha$ and $c$.
Next, according to \eqref{2.1 (5)} and \eqref{tilde K_j+1}, we know that the random polynomial $\tilde S_{j+1}[\cdots](t)$ and $\tilde K_{j+1}[\cdots](t)$ have coefficients bounded by some constant on $[0,\zeta]$. Thus, according to equation (\ref{h_j+1}), applying recurrence on j and using $\mathbf{(P1)}$ and $\mathbf{(P2)}$, we have $h_j\in \omega(\alpha_i,c_i,\zeta)$. Furthermore, we know that $h_j=Q_tg_j$, which means $g_j$ is a polynomial in the image of $Q^{-1}$, then we conclude $g_j(t)\in \omega(\alpha_i,c_i,\zeta)$ by using $\mathbf{(P1)}$.

\noindent\textbf{Step 5--estimate of $M_{N+1}$:}
 Let $\zeta'=T\wedge T_K^0\wedge T_K^{\varepsilon}$, we already prove that $g_j(t)\in \omega(\alpha_i,c_i,\zeta')$, for $j=1,\cdots,N$. Assume that there exists $\alpha_1, c_1,\cdots,\alpha_N,c_N$ such that $M_j\in \omega(\alpha_i,c_i,\zeta')$, by induction (see details in \cite{Aze}[3-(3)]), we have
\begin{equation}
\label{bound condition}
|\nu_{N+1}|+|\upsilon_{N+1}|\leq c(1+M_1)^{N+1},~~~t\leq \zeta'.
\end{equation}
The boundedness of $\nu_{N+1}$ and $\upsilon_{N+1}$
ensures that the coefficients $F_N$ and $G_N$ of equation (\ref{r_N+1}) are bounded in the Euclidean norm of the polynomial (constant coefficients) in $|Q|,|Q^{-1}|,|M_1|,|M_N|,|g_1|$, $\cdots,|g_{N-1}|$. All these processes belong to the $\omega(\alpha,c,\zeta')$ family, for some $\alpha$'s and $c$'s. By property $\mathbf{(P1)}$, we conclude that $F_N$ and $G_N$ belong to $\omega(\alpha,c,\zeta')$, which implies $r_{N+1}\in \omega(\alpha,c,\zeta')$. Applying $\mathbf{(P1)}$ again, we have $M_{N+1}\in \omega(\alpha,c,\zeta').$
Now it only remains to check the induction from the initial condition that 
\begin{equation}
dM_1(t)=\frac{1}{\varepsilon}[\cV(t,\varepsilon,y^{\varepsilon}(t))-\cV(t,0,y_0)]dy+\frac{1}{\varepsilon}[b(t,\varepsilon,y^{\varepsilon}(t))-b(t,0,y_0)]dt.
\end{equation}
Denote $p_t=(y_0,M_1(t))$ and apply  \eqref{bound condition}, we have
\begin{align}
&dM_1(t)=\nu_{\varepsilon}(t,p_t)dy(t)+\upsilon_{\varepsilon}(t,p_t)dt,\\
&|\nu_{\varepsilon}(t,p_t)|+|\upsilon_{\varepsilon}(t,p_t)|\leq M(1+|M_1(t)|),
\end{align}
where the constant M depends on the compact set K and  the time T. Then following proposition \ref{gaussian tail}, we have $M_1\in \omega(\alpha,c,\zeta')$, for some constants $\alpha,c$.
The proof is thus completed.
\end{proof}

\begin{remark}
The special case check for $t\rightarrow 0$ refers to \cite[Section 5]{Aze}, the only difference is that the linear transformation \eqref{J_t}, \eqref{K_t} in our case does not have the correction term as in \cite[Appendix 7]{Aze}, since our equations are in the Stratonovich form, so it is even easier now.
\end{remark}

\begin{proof}[\textbf{Proof of Lemma \ref{lemma P}}]
Assume that $\tau<\zeta=T_C(r)\wedge T_K^{\varepsilon}\wedge T_K^0\wedge\tilde{T}_K^{\varepsilon}$ from now on. Let K be a compact set of $\mathbb{R}^n$ containing 0, we define $\tilde T_K^{\varepsilon}=\inf\{t;(\varepsilon^{\mathcal O(I)}\Lambda_I)_{\mathcal O(I)\leq N}\\ \notin K\}$ and $T_K^{\varepsilon}$ as before (section 3).
By lemma \ref{lemma M and g}, $g_j\in \omega(\alpha_j,c_j,\zeta)$, for $j=1,\cdots,N$. Observe that $P_{N+1}(\varepsilon,t)$ is a polynomial (with random coefficients) of $\varepsilon$ and $(V_I)_{\mathcal O(I)<N+1}$, and $V_I$ are bounded by some constant according to Theorem \ref{convergence}. Furthermore, $\Lambda_I$ is a linear combination of $Y^I$. Applying $\mathbf{(P2)}$, we know that $Y^I=\int_{\triangle^I}d^{\cV^{-1}I}$ also belongs to $\omega(\alpha,c,\zeta)$. Recall that: 
\begin{align}
\exp \Big(\sum_{I:\mathcal O(I)=1 }^{\infty}\varepsilon^{\mathcal O(I)}\Lambda_I(t)V_I\Big)(y_0)&=\phi((\varepsilon^{\mathcal O(I)}\Lambda_I)_{\mathcal O(I)<N+1})\nonumber\\
 &=y_0+\sum_{k=1}^{N}\varepsilon^k h_k(t)+\varepsilon^{N+1} P_{N+1}(\varepsilon,t).
\end{align}
Note that $P_{N+1}(\varepsilon,t)$ relates to the exit time $\tilde T_K^{\varepsilon}$ and we can choose the compact set $K$ small enough such that $\tilde T_K^{\varepsilon}<T_C(r)$, then we only need to consider $\tilde T_K^{\varepsilon}$ in our proof. Let us fix time $\tau\in(0,\tilde T_K^{\varepsilon})$,\begin{equation}
\label{stopping time estimate}
\mathbb{P}\Big(\sup_{t\in[0,\tau]}\|P_{N+1}(\varepsilon,t)\|\geq \xi\Big)\leq \mathbb{P}\Big( \sup_{t\in[0,\tau]}\|P_{N+1}(\varepsilon,t)\|\geq \xi;\tau \leq \tilde T_K^{\varepsilon}\Big).
\end{equation}
Property $\mathbf{(P1)}$ shows the R.H.S of (\ref{stopping time estimate}) belongs to some $\omega(\alpha',c',\zeta\wedge \tilde T_K^{\varepsilon})$, we conclude that $P_{N+1}\in \omega(\alpha_P,c_P,\zeta\wedge \tilde T_K^{\varepsilon})$ for some $\alpha_P, c_P$.
\end{proof}

\end{appendices}

\bibliographystyle{amsplain}

\end{document}